\documentclass[review]{elsarticle}
\usepackage{lineno}
\usepackage[latin1]{inputenc}
\usepackage{times}
\usepackage{amssymb,mathrsfs, amsmath}
\usepackage{amsmath,amsthm}
\usepackage{amssymb}
\usepackage{latexsym}
\usepackage{amsfonts}
\usepackage{epsfig}
\usepackage{hyperref}
\usepackage{graphicx}
\usepackage{graphics}
\usepackage[all]{xy}
\usepackage{tikz-cd}
\usepackage[mathscr]{euscript}
\usepackage{mathrsfs}
\usepackage{stmaryrd}

\newtheorem{thm}{Theorem}[section]

\newtheorem{defn}{Definition}[section]
\newtheorem{prop}{Proposition}[section]

\newtheorem{lem}{Lemma}[section]

\newtheorem{rem}{Remark}[section]

\newtheorem{cor}{Corollary}[section]

\newtheorem{exmpl}{Example}[section]

\newtheorem{obsn}{Observation}[section]
\journal{... }

\DeclareMathOperator{\Hom}{Hom}

\begin{document}
\begin{frontmatter}
\title{Crossed  Homomorphisms on  associative superalgebras}
%\tnotetext[mytitlenote]{Fully documented templates are available in the elsarticle package on \href{http://www.ctan.org/tex-archive/macros/latex/contrib/elsarticle}{CTAN}.}

%% Group authors per affiliation:
%\author{\fnref{myfootnote}}
%\address{}
%\fntext[myfootnote]{This author is supported by CSIR,\textsc{India}.}
%\author{RB Yadav \fnref{myfootnote}\corref{mycorrespondingauthor}}
%\address{Indian Statistical Institute Tezpur, Assam 784028, \textsc{India}}
%\cortext[mycorrespondingauthor]{Corresponding author}
%\ead{rbyadav15@gmail.com}
%\author{Subir Mukhopadhyay\fnref{myfootnote}\corref{mycorrespondingauthor}}
%\address{Indian Statistical Institute Tezpur, Assam 784028, \textsc{India}, \textsc{India}}
%\cortext[mycorrespondingauthor]{Corresponding author}
%\ead{goutam@isical.ac.in}
%\fntext[myfootnote]{This author is supported by CSIR,\textsc{India}.}

%
\author{RB Yadav \fnref{myfootnote}\corref{mycorrespondingauthor}}
%\address{Sikkim University, Gangtok, Sikkim, 737102, \textsc{India}}
\cortext[mycorrespondingauthor]{Corresponding author}
\ead{rbyadav01@cus.ac.in, rbyadav15@gmail.com}
\author{Arpan Sharma\corref{mycoauthor}}
%%\address{Sikkim University, Gangtok, Sikkim, 737102, \textsc{India}}
\ead{arpansachin@gmail.com}
%\author{ Rinkila Bhutia\corref{mycoauthor}}
\address{Sikkim University, Gangtok, Sikkim, 737102, \textsc{India}}
%\ead{rbhutia@cus.ac.in}
\begin{abstract}
 In this paper, we introduce the notion of crossed homomorphisms on associative superalgebras. We show that crossed homomorphisms are precisely the Maurer--Cartan elements of a naturally associated graded Lie algebra. This characterization enables us to construct a cohomology theory of crossed homomorphisms.  We then investigate formal deformations of crossed homomorphisms and derive the corresponding deformation equations. It is proved that the infinitesimal part of a formal deformation is a $1$-cocycle in the associated cohomology, while the obstruction to extending a deformation of finite order is  a $2$-cocycle.
\end{abstract}

\begin{keyword}
\texttt{  Cohomology, Crossed homomorphism, Deformation, Maurer-Cartan element, Associative superalgebra, Graded Lie Algebra}
\MSC[2020]  	16E40, 16S80, 	,  17B56, 17B70,  17B10, 17B55
\end{keyword}
\end{frontmatter}
%\linenumbers
%\maketitle

%\maketitle

\section{Introduction}
Crossed homomorphisms (also called relative difference operators) arise naturally whenever a group, Lie algebra, associative algebra, or superalgebra acts on another algebraic object of the same kind. They  appear in many branches of pure mathematics and also in several areas of theoretical computer science and mathematical physics. The concept of crossed homomorphisms for  Lie algebras is in \cite{Lue1966}. In recent years, crossed homomorphisms have been studied by several scholars \cite{MR4499354}, \cite{Apurba}, \cite{Wang},  \cite{MR4708529}, \cite{MR4957732}, \cite{MR4908221}, \cite{RB-Arpan}.
Associative superalgebras play a fundamental role in graded algebra, supergeometry, and mathematical physics. Cohomology and deformation theory of associative algebra was studied in \cite{MR4611401}. Since many classical algebraic constructions admit meaningful $\mathbb{Z}_2$-graded analogues, it is natural to investigate crossed homomorphisms in the setting of associative superalgebras and to develop the corresponding cohomological and deformation theories.
 Given a graded Lie algebra, its Maurer--Cartan elements often describe algebraic structures of interest, while the associated twisted differential governs their infinitesimal deformations. This viewpoint has become one of the central tools in modern deformation theory, providing a unified framework for studying deformations and their obstructions.
The primary objective of this paper is to develop such a framework for crossed homomorphisms on associative superalgebras.
We first introduce the notion of a crossed homomorphism on an associative superalgebra with coefficients in a bimodule. We then construct a graded Lie algebra on the space of multilinear maps and prove that a linear map is a crossed homomorphism if and only if it satisfies the Maurer--Cartan equation in this graded Lie algebra. This characterization identifies the defining equation of crossed homomorphisms with a Maurer--Cartan equation and provides the algebraic setting required for cohomological and deformation-theoretic investigations.
Using the Maurer--Cartan characterization, we define the cohomology of a crossed homomorphism.  The associated cohomology groups naturally arise in their deformation theory.
Finally, we study formal deformations of crossed homomorphisms. We derive the deformation equations  and show that the first-order term of every formal deformation is a $1$-cocycle in the associated cohomology. Furthermore, we prove that the obstruction to extending a deformation of order $n$ to order $n+1$ is a $2$-cocycle. Consequently, the cohomology introduced in this paper governs both infinitesimal deformations and obstruction theory for crossed homomorphisms on associative superalgebras.
The paper is organized as follows. Section $1$ recalls the necessary preliminaries on associative superalgebras and graded Lie algebras.  In Section $2$, we introduce definition of crossed homomorphism on associative superalgebra and give some interesting examples.  In Section $3$, we introduce crossed homomorphisms and characterize them as Maurer--Cartan elements. In Section $4$ develops the associated cohomology theory. In Section $5$, we investigate formal deformations and establish the corresponding obstruction theory.
\section{Crossed homomorphisms between associative superalgebras}
In this section, we study crossed homomorphisms between associative superalgebras and relate this notion to crossed homomorphisms of Lie superalgebras. Let $\Delta$ be a commutative group. A $\Delta$-graded algebra   over $K$ is a $\Delta$-graded vector space $E=\bigoplus_{\alpha\in\Delta}E^\alpha$ together with a bilinear map $m:E\times E\rightarrow E$ such that $m(E^\alpha\times  E^\beta)\subset E^{\alpha+\beta}$ for all $\alpha,\;\beta\in\Delta$. 
\begin{defn}
  An  associative superalgebra  is a $\mathbb{Z}_2$-graded algebra $A=A_{0}\oplus A_{1}$ such that $m(m(a,b),c)=m(a,m(b,c)),$ for all $a,b,c\in A.$   
\end{defn}
\begin{rem}
    Every associative superalgebra is an associative algebra but converse is not true. An associative algebra structure $m$ on a $\mathbb{Z}_2$-graded vector space  $A=A_{0}\oplus A_{1}$ is associative superalgebra structure if $m$ is homogeneous of even degree. 
\end{rem}
\begin{exmpl} Let $A=A_0\oplus A_1,$  where $A_0=\mathrm{span}\{e\},
\quad
A_1=\mathrm{span}\{f\}.$ Define a multiplication on $A$ by $e^2=e,\quad ef=f,\quad fe=f,\quad f^2=e+f,$ 
and extend it bilinearly. Then $A$ is an associative algebra. Indeed, one checks associativity on basis elements. For example, $ 
(f^2)f=(e+f)f=f+(e+f)=e+2f,$ 
while $  f(f^2)=f(e+f)=f+(e+f)=e+2f.$ 
Similarly, all other products satisfy associativity.
However, $A$ is not an associative superalgebra with respect to the given grading. Indeed, $ 
f\in A_1,$ 
so in a superalgebra one must have $f^2\in A_0.$ 
But $  f^2=e+f\notin A_0,$ 
since it has a nonzero component in $A_1$.
Hence $A=A_0\oplus A_1$ is an associative algebra but not an associative superalgebra.
\end{exmpl}
\begin{exmpl}\label{rbLSe1}
 Let $V=V_{0}\oplus V_{1}$ be a $\mathbb{Z}_2$-graded vector  space, $dim V_{0} = m$, $dim V_{1} = n$. Consider the  set  $End V$ of all endomorphisms of $V$. 
Define
\begin{equation}
 End_i \; V = \{a \in End \;V\;|\; a V_s\subseteq V_{i+s}\} \; ,\; i,s\in \mathbb{Z}_2
 \end{equation}
 One can easily verify that  $End\; V=End_{0}\; V\oplus End_{1} \; V  $. $End\;V$ is an associative superalgebra with respect to composition operation.
In some (homogeneous) basis of $V$, $End\; V$ ( also denoted by $\ell(m,n)$ ) consists of  block matrices of the form $\big(\begin{smallmatrix}\alpha & \beta \\ \gamma & \delta \end{smallmatrix}\big)$, where $\alpha, \beta, \gamma,  \delta$ are matrices of order $m\times m$, $m\times n$, $n\times m$ and $n\times n,$ respectively.
\end{exmpl}

 Let $A$ and $B$ be associative superalgebras. Suppose that $A$ acts on $B$, i.e., $B$ is an $A$-bimodule. This means there exist degree $0$  bilinear maps
\[
l : A \otimes B \to B, \quad (a,x) \mapsto a \cdot x,
\qquad
r : B \otimes A \to B, \quad (x,a) \mapsto x \cdot a,
\]
such that for all homogeneous elements $a,b \in A$ and $x,y \in B$, 
 $(ab)\cdot x = a \cdot (b \cdot x)$,
 $(a \cdot x)\cdot b = a \cdot (x \cdot b)$,
 $(x \cdot a)\cdot b = x \cdot (ab)$,
 $(a \cdot x)\cdot y = a \cdot (xy),$
 $(x \cdot a)\cdot y = x \cdot (a \cdot y),$  
 $(xy)\cdot a = x \cdot (y \cdot a).$ 
In this case, we call $B$ as  an associative $A$-bimodule. In particular, any associative superalgebra $A$ is an associative $A$-bimodule via the adjoint action.

\begin{defn}
Let $B$ be   an associative $A$-bimodule.  We call a  linear map $H : A \to B$ of degree $0$  a \emph{crossed homomorphism of associative superalgebras} if for all homogeneous $a,b \in A$,
\begin{equation}\label{SCH1}
  H(ab) = a \cdot H(b) + H(a) \cdot b + H(a) H(b).  
\end{equation}
\end{defn}
\begin{obsn}
Every  crossed homomorphism of associative superalgebras is also a crossed homomorphism of corresponding associative algebras but converse is not true.   
\end{obsn}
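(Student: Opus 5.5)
The observation has two parts: every crossed homomorphism of associative superalgebras is a crossed homomorphism of the underlying associative algebras, and the converse fails. I will treat them separately.

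For the first part I forget the grading. By the preceding Remark, an associative superalgebra $A$ is in particular an associative algebra, and an associative $A$-bimodule $B$ in the super sense is in particular an ordinary bimodule, since the six compatibility identities are exactly the ordinary bimodule axioms. Identity \eqref{SCH1} is assumed only for homogeneous $a,b$. Both sides are bilinear in $(a,b)$: the left side because multiplication is bilinear and $H$ is linear, the right side because $l$, $r$, the product on $B$ and $H$ are (bi)linear. Writing $a=a_0+a_1$ and $b=b_0+b_1$ and expanding, the identity therefore extends to all $a,b\in A$. So $H$ is a crossed homomorphism of the underlying associative algebras.

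For the converse I need a crossed homomorphism of the underlying algebras that is not one of superalgebras. The only extra condition in the super definition is that $H$ has degree $0$, so the counterexample should be a map that satisfies \eqref{SCH1} for all elements but mixes parity. The cleanest choice is the trivial action: take $B$ with $a\cdot x=0=x\cdot a$. Then \eqref{SCH1} reduces to $H(ab)=H(a)H(b)$, so any algebra homomorphism $A\to B$ is a crossed homomorphism of the underlying algebras.

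Concretely, I take $A=B=K$-span$\{e,f\}$ with $e$ even and $f$ odd, multiplication $e^2=e$, $ef=fe=f$, $f^2=0$. This is a superalgebra, namely $K[f]/(f^2)$ with $f$ odd. I make $B$ a bimodule via the zero actions; all six axioms hold trivially since every term involving an action vanishes. For $H$ I need a non-homogeneous algebra endomorphism, for instance $H(e)=e$ and $H(f)=e\,$? That fails: $H(f)^2=e\neq 0=H(f^2)$. The safe fix is to choose $B$ with a nonzero odd-to-even square-zero element. I take $B=K[x,y]/(x^2,y^2,xy)$, unital with $x$ even and $y$ odd, and define $H(e)=1$ and $H(f)=x$. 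Then $H$ is a unital algebra map because $x^2=0$, and it sends the odd element $f$ to the even element $x$, so it is not of degree $0$.

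The main obstacle is exactly this verification: confirming that the chosen $H$ is multiplicative on the basis while failing homogeneity. A simpler fallback is $H(e)=0$, $H(f)=x$ with $x$ even, $x^2=0$, and $xB=0$. Here the needed products $H(ef)=H(e)H(f)$, $H(fe)$, $H(f^2)=H(f)^2=0$ and $H(e^2)$ all come out as $0=0$ or are checked directly.
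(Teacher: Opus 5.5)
Your main argument is correct, but it builds a different counterexample from the paper's, and it contains one wrong side remark.

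\textbf{First half.} You prove what the paper only asserts. Equation~\ref{SCH1} and the six bimodule axioms are stated for homogeneous elements. Both sides are multilinear, so they extend to all elements.

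\textbf{Converse.} The paper uses the adjoint action of $A$ on itself. It takes $A=\mathrm{span}\{e\}\oplus\mathrm{span}\{f\}$ with $f$ odd and $f^2=e$ (the group algebra of $\mathbb{Z}_2$), and sets $H(e)=0$, $H(f)=e-f$. It then checks four cases, and the $(f,f)$ case needs a cancellation. This $H$ is $\phi-\mathrm{id}$ for the non-graded algebra endomorphism $\phi(e)=\phi(f)=e$.

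You instead use zero actions, so the identity collapses to multiplicativity. The check is then immediate: $H(e)=1$, $H(f)=x$ with $x$ even and $x^2=0$ is a unital algebra map sending an odd element to a nonzero even one. Your $B$ can be simply $K[x]/(x^2)$; the generator $y$ plays no role.

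The cost is that you need a second algebra $B$. Your first attempt failed for a structural reason. Any endomorphism $\psi$ of $K[f]/(f^2)$ has $\psi(e)\in\{0,e\}$, since these are the only idempotents. This forces $\psi(f)=0$ or $\psi(f)\in Kf$, so $\psi$ is even. Hence for $A=B=K[f]/(f^2)$ neither the trivial nor the adjoint action gives a counterexample. The paper's choice $f^2=e$ avoids this and keeps the example inside the setting $H:A\to A$ of Sup AssCH pairs.

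\textbf{Error to delete.} Your ``simpler fallback'' is wrong. With zero actions $H$ must be multiplicative, and $f=ef$ forces $H(f)=H(e)H(f)=0$ whenever $H(e)=0$. Concretely, $H(ef)=H(f)=x\neq 0=H(e)H(f)$. The main counterexample does not depend on this fallback, so remove it.
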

\begin{exmpl} Consider the associative algebra $A=A_0\oplus A_1,$  where
$A_0=\mathrm{span}\{e\}$ and  $ A_1=\mathrm{span}\{f\}$,  
with multiplication $e^2=e,\quad ef=f,\quad fe=f,\quad f^2=e.$
Define a linear map $H:A\to A$  by  $H(e)=0,\quad H(f)=e-f.$
We show that $H$ is a crossed homomorphism of the associative algebra $A$, namely, $H(ab)=H(a)b+aH(b)+H(a)H(b)$  for all $a,b\in A$. It suffices to check this on basis elements. \\For $(e,e)$:
$$
H(e^2)=H(e)=0,\;
H(e)e+eH(e)+H(e)H(e)=0.
$$
For $(e,f)$:
$$
H(ef)=H(f)=e-f,\;
H(e)f+eH(f)+H(e)H(f)
=0+e(e-f)+0=e-f.
$$
For $(f,e)$:
$$
H(fe)=H(f)=e-f,\;
H(f)e+fH(e)+H(f)H(e)
=(e-f)e+0+0=e-f.
$$
For $(f,f)$:
$$ H(f^2)=H(e)=0,\; H(f)f+fH(f)+H(f)H(f)
=(e-f)f+f(e-f)+(e-f)(e-f)
=0.$$  
%Over a field of characteristic $2$, we have $ 
%f+f=0,$  hence $ 
%f+f+e=e=H(f^2).$ 
Therefore $H$ is a crossed homomorphism of the associative algebra $A$. %(over a field of characteristic $2$).
However, $H$ is not a crossed homomorphism of an associative superalgebra, since a crossed homomorphism of associative superalgebras is required to be of  even degree, i.e. $ 
H(A_i)\subseteq A_i.$ 
But $f\in A_1,
\quad
H(f)=e-f \notin A_1,$  so $H$ does not preserve the grading.
Hence $H$ is a crossed homomorphism of the associative algebra structure on $A$, but not of the corresponding $\mathbb Z_2$-graded (super) structure.
\end{exmpl}
\begin{exmpl} Define $ H:End(V)\to End(V), \qquad H(a)=a. $ 
Then $H$ is an even linear map. For homogeneous $ a,b\in End(V),$
we have $ H(ab)=ab.$ On the other hand, $H(a)b+aH(b)+H(a)H(b)
=ab+ab+ab.$ 
Thus $H$ is not a crossed homomorphism over a field of characteristic different from $2$.
\end{exmpl}
\begin{exmpl} Let $A$ be an associative superalgebra. Let $  \phi:A\to A$ be an even associative superalgebra morphism, and define $ H(a)=\phi(a)-a.$  Then for homogeneous $a,b\in A$, we have \begin{align}
H(ab)&=\phi(ab)-ab \nonumber\\
&=\phi(a)\phi(b)-ab \nonumber\\
&=(H(a)+a)(H(b)+b)-ab \nonumber\\
&=H(a)H(b)+H(a)b+aH(b)\nonumber.
\end{align}
Therefore, $H(ab)=H(a)b+aH(b)+H(a)H(b),$ 
so $H$ is a crossed homomorphism.
\end{exmpl}
\begin{rem}
\begin{enumerate}
% \item A linear map $d : A \to A$ satisfying
% \[
% d(ab) = a d(b) + d(a) b + \lambda\, d(a)d(b)
% \]
% is called a differential operator of weight $\lambda$. Thus, a crossed homomorphism $H : A \to A$ is precisely a differential operator of weight $1$.

\item If the action of $A$ on $B$ is trivial, then a crossed homomorphism $H : A \to B$ reduces to a superalgebra morphism.

\item If the multiplication on $B$ is trivial, then a crossed homomorphism $H : A \to B$ is simply a derivation of $A$ with values in the $A$-bimodule $B$.
\end{enumerate}
\end{rem}

\begin{defn}
Let $H, H' : A \to B$ be two crossed homomorphisms from an associative superalgebra $A$ to an associative superalgebra $B$.  
We define a \emph{morphism} from $H$ to $H'$ as a pair $ (f_A,f_B)$ of two algebra homomorphisms  $f_A : A \to A, \qquad f_B : B \to B$ such  that 
$f_B \circ H = H' \circ f_A\;\; \text{and} \;  f_B(ax) = f_A(a) f_B(x),\; f_B(xa) = f_B(x) f_A(a)$, 
 for all $a \in A$ and $x \in B$.
 Let  $H, H', H'' : A \to B$ are crossed homomorphisms from an associative superalgebra $A$ to an associative superalgebra $B$. If $(f_A,f_B)$, $(f_A',f_B')$ are morphisms from $H$ to $H'$ and $H'$ to $H''$, respectively, then $(f_A'\circ f_A, f_B'\circ f_B)$ is a morphisms from $H$ to $H''$. $(I_A,I_B)$ is a morphism from $H$ to itself. This gives a category of crossed homomorphisms from $A$ to $B.$ We denote this category by $SCH(A,B)$.
\end{defn}

\begin{defn}
\begin{enumerate}
\item We define a pair $(A, H_A)$ consisting of an associative superalgebra $A$ and a crossed homomorphism $H_A : A \to A$. We call this pair as    \emph{Sup AssCH pair}.
\item Let $(A, H_A)$ and $(B, H_B)$ be two Sup AssCH pairs. A \emph{morphism} between them is a superalgebra morphism $f : A \to B$ such that $H_B \circ f = f \circ H_A.$ 
\end{enumerate}
This gives a category of  Sup AssCH pairs. We  denote the category of  Sup AssCH pairs by $\mathbf{Sup AssCH}$.
\end{defn}
\begin{lem}\label{TA1}
Let $H : A \to B$ be a crossed homomorphism. Then there is a new associative $A$-bimodule structure on $B$ defined by
\[
l_H(a,x) = ax + H(a)x, \qquad r_H(x,a) = xa + xH(a),
\]
for all $a \in A$ and $x \in B$.
\end{lem}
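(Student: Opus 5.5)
We verify directly that the new actions $a\cdot_H x := ax + H(a)x$ and $x\cdot_H a := xa + xH(a)$ satisfy all six axioms of an associative $A$-bimodule listed before the definition of crossed homomorphism.

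First, since $H$ is even and the original actions and the multiplication of $B$ are even, $l_H$ and $r_H$ are even bilinear maps. Next we rewrite everything in terms of the combination $a + H(a)$. The relation \eqref{SCH1} can be read formally as
\[
ab + H(ab) = (a+H(a))(b+H(b)),
\]
where products of an element of $A$ with an element of $B$ mean the given actions. So $a\mapsto a+H(a)$ behaves like a homomorphism into the semidirect product $A\ltimes B$. With this notation, $a\cdot_H x$ is just ``$(a+H(a))x$''.

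We then check the axioms in order.

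\textbf{(1)} For $(ab)\cdot_H x = a\cdot_H(b\cdot_H x)$, expand the right side:
\[
a(bx) + a(H(b)x) + H(a)(bx) + H(a)(H(b)x).
\]
Use the bimodule axioms $(ab)x=a(bx)$, $(aH(b))x = a(H(b)x)$, $(H(a)b)x = H(a)(bx)$, and associativity of $B$. This gives
\[
(ab)x + \bigl(aH(b)+H(a)b+H(a)H(b)\bigr)x = (ab)x + H(ab)x .
\]
The last equality is \eqref{SCH1}.

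\textbf{(2)} The mixed axiom $(a\cdot_H x)\cdot_H b = a\cdot_H(x\cdot_H b)$ expands into four terms on each side. These match term by term using $(ax)b=a(xb)$, $(H(a)x)b = H(a)(xb)$, $(ax)H(b) = a(xH(b))$ and associativity in $B$.

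\textbf{(3)} The right-action axiom $(x\cdot_H a)\cdot_H b = x\cdot_H(ab)$ is symmetric to (1) and again uses \eqref{SCH1}.

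\textbf{(4)--(6)} The axioms involving the product of $B$ are $(a\cdot_H x)y = a\cdot_H(xy)$, $(x\cdot_H a)y = x(a\cdot_H y)$ and $(xy)\cdot_H a = x(y\cdot_H a)$. Each follows by expanding and applying the corresponding original axiom together with associativity of $B$, for example $(xH(a))y = x(H(a)y)$.

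The only step that is not a mechanical rearrangement is using \eqref{SCH1} in (1) and (3). The main care needed is to group the terms so that exactly $aH(b)+H(a)b+H(a)H(b)$ appears. We also need to be sure that the original bimodule axioms apply when one entry is $H(a)\in B$ rather than an element of $A$; this is legitimate because those are precisely the axioms of the form $(ab)\cdot x$, $(a\cdot x)\cdot y$, $(x\cdot a)\cdot y$ with $x$ or $y$ replaced by $H(a)$. No sign conventions enter, because every map involved is even and the axioms are stated without Koszul signs.
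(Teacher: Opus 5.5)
Your proposal is correct and takes the same route as the paper: a direct expansion of the six bimodule axioms, using the crossed homomorphism identity~\eqref{SCH1} exactly in the checks $l_H(a,l_H(b,x))=l_H(ab,x)$ and $r_H(r_H(x,a),b)=r_H(x,ab)$. The remaining axioms follow from the original bimodule axioms, applied with $H(a)\in B$ in one slot, together with associativity of $B$. Your remark that $l_H$ and $r_H$ are even because $H$ is even is a small detail the paper leaves implicit.
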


\begin{proof}
For $a,b \in A$ and $x \in B$, we compute:
$$
\begin{aligned}
l_H(a, l_H(b,x)) 
&= l_H(a, bx + H(b)x) \\
&= a(bx + H(b)x) + H(a)(bx + H(b)x) \\
&= (ab)x + \{aH(b) + H(a)b + H(a)H(b)\}x \\
&= (ab)x + H(ab)x \; \;\; \{\text{by  Equation }\;\ref{SCH1}\}\\
&= l_H(ab,x),
\end{aligned}
$$
Next,
$$ 
\begin{aligned}
r_H(l_H(a,x), b)
&= r_H(ax + H(a)x, b) \\
&= (ax + H(a)x)b + (ax + H(a)x)H(b) \\
&= a(xb) + H(a)(xb) + a(xH(b)) + H(a)(xH(b)) \\
&= l_H(a, xb + xH(b)) \\
&= l_H(a, r_H(x,b)).
\end{aligned}
$$ 
Similarly,
$$ 
\begin{aligned}
r_H(r_H(x,a), b)
&= r_H(xa + xH(a), b) \\
&= (xa + xH(a))b + (xa + xH(a))H(b) \\
&=x(ab) + x\{ H(a)b+aH(b)+H(a)H(b)\}\\
&= x(ab) + xH(ab) \; \;\; \{\text{by  Equation }\;\ref{SCH1}\}\\
&= r_H(x, ab).
\end{aligned}
$$ 
Finally, one checks easily that
$$
l_H(a,x)y = l_H(a,xy), \qquad r_H(xy,a) = x r_H(y,a),\qquad
r_H(x,a)y=xl_H(a,y).$$
So $B$ is an associative $A$-bimodule with respect to actions $l_H$ and $r_H$.
\end{proof}

Since $B$ is an associative $A$-bimodule, the direct sum $A \oplus B$ admits the structure of a semidirect product superalgebra given by $  (a,x)\cdot (b,y) = \big(ab,\, ay + xb + xy\big),$ 
which we denote by $A \ltimes B$.
Using the twisted actions defined by Lemma \ref{TA1}, we introduce another superalgebra  product on $A \oplus B$ given by 
 $$(a,x)\cdot_H (b,y) = \big(ab,\, l_H(a,y) + r_H(x,b) + xy\big),$$ 
and denote the resulting superalgebra by $A \ltimes_H B$.
\begin{thm}
Let $A, B$ be associative superalgebras and $B$ an associative $A$-bimodule. Let $H : A \to B$ be a linear map. The following are equivalent:
\begin{enumerate}
\item $H$ is a crossed homomorphism.
\item The map $i_H : A \to A \ltimes B$, $a \mapsto (a, H(a))$, is an superalgebra morphism.
\item The graph $\mathrm{Gr}(H) = \{(a,H(a)) \mid a \in A\}$ is a subsuperalgebra of $A \ltimes B$.
\item The operations $l_H, r_H$ define an $A$-bimodule structure on $B$ and the map
\[
\bar{H} : A \ltimes_H B \to A \ltimes B, \quad (a,x) \mapsto (a, H(a)+x)
\]
is a superalgebra morphism.
\end{enumerate}
\end{thm}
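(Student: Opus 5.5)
I will prove the cycle of implications $(1)\Rightarrow(2)\Rightarrow(3)\Rightarrow(1)$ and then $(1)\Leftrightarrow(4)$ separately. Everything rests on one identity in the semidirect product superalgebra $A\ltimes B$. For homogeneous $a,b\in A$,
\begin{equation*}
(a,H(a))\cdot(b,H(b))=\big(ab,\; aH(b)+H(a)b+H(a)H(b)\big).
\end{equation*}
Since $H$ has degree $0$, the map $i_H$ is even. It is also clearly linear and injective. Hence $i_H$ is a superalgebra morphism if and only if the right-hand side above equals $(ab,H(ab))$, which is exactly Equation \ref{SCH1}. This proves $(1)\Leftrightarrow(2)$.

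For $(2)\Rightarrow(3)$: $\mathrm{Gr}(H)=i_H(A)$ is the image of a superalgebra morphism. It is therefore closed under multiplication. It is a graded subspace because $H$ is even, so $(a,H(a))$ has the same parity as $a$. For $(3)\Rightarrow(1)$: closure gives $(a,H(a))\cdot(b,H(b))=(c,H(c))$ for some $c\in A$. Comparing first components forces $c=ab$, and comparing second components gives Equation \ref{SCH1}.

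For $(1)\Rightarrow(4)$, the bimodule part is precisely Lemma \ref{TA1}. For the morphism property, I will expand both sides of
\begin{equation*}
\bar H\big((a,x)\cdot_H(b,y)\big)=\bar H(a,x)\cdot\bar H(b,y).
\end{equation*}
The left-hand side is
\begin{equation*}
\big(ab,\;H(ab)+ay+H(a)y+xb+xH(b)+xy\big).
\end{equation*}
The right-hand side is
\begin{equation*}
\big(ab,\;ay+aH(b)+H(a)b+xb+H(a)H(b)+H(a)y+xH(b)+xy\big).
\end{equation*}
The terms involving $x$ or $y$ match term by term. What remains is $H(ab)$ on the left against $aH(b)+H(a)b+H(a)H(b)$ on the right, which is Equation \ref{SCH1}. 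Conversely, for $(4)\Rightarrow(1)$, I specialize the same computation to $x=y=0$. This yields $H(ab)=aH(b)+H(a)b+H(a)H(b)$ directly, so the bimodule hypothesis in (4) is not even needed for this direction.

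The only point needing care is bookkeeping rather than ideas. I must check that $\bar H$ is even and linear, which holds since $H$ has degree $0$. I must also check that $A\ltimes_H B$ really is an associative superalgebra when (1) holds. One way is to observe that $\bar H$ is a linear bijection with inverse $(a,x)\mapsto(a,x-H(a))$ that transports $\cdot$ to $\cdot_H$, so associativity is inherited from $A\ltimes B$. Alternatively, one can invoke the standard semidirect-product construction for the bimodule given by Lemma \ref{TA1}. I expect the term-by-term matching in the $(1)\Leftrightarrow(4)$ expansion to be the main place where an error could slip in, so I will write both sides in full as above.
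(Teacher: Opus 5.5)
Your proposal is correct and follows essentially the same route as the paper: everything reduces to expanding $(a,H(a))\cdot(b,H(b))$ in $A\ltimes B$, and $\bar H\big((a,x)\cdot_H(b,y)\big)$ against $\bar H(a,x)\cdot\bar H(b,y)$, then comparing with Equation \ref{SCH1}. Your extra bookkeeping fills in details the paper leaves implicit: routing $(1)\Rightarrow(3)$ through (2), citing Lemma \ref{TA1} for the bimodule part, specializing to $x=y=0$, and using transport of structure to show $A\ltimes_H B$ is associative; if $H$ is only assumed linear, also note in the reverse directions that evenness of $i_H$ or $\bar H$, or gradedness of $\mathrm{Gr}(H)$, forces $H(A_\alpha)\subseteq B_\alpha$.
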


\begin{proof}
\textbf{$1 \Leftrightarrow 2$:}  
The map $i_H$ is an algebra morphism iff $  i_H(ab) = i_H(a)i_H(b),$
i.e.
\[
(ab, H(ab)) = (a,H(a))\cdot (b,H(b)) = (ab, aH(b) + H(a)b + H(a)H(b)).
\]
This holds iff $H$ is a crossed homomorphism.

\medskip
\textbf{ $1\Leftrightarrow 3$ :}  
$\mathrm{Gr}(H)$ is a subsuperalgebra of $A \ltimes B$ iff
$ (a,H(a))\cdot (b,H(b)) \in \mathrm{Gr}(H),$ 
i.e. iff $H(ab) = aH(b) + H(a)b + H(a)H(b).$

\medskip
\textbf{ $ 1 \Leftrightarrow 4$:}  
$\bar{H}$ is a superalgebra morphism iff
$\bar{H}((a,x)\cdot_H (b,y)) = \bar{H}(a,x)\cdot \bar{H}(b,y).$ 
Expanding both sides:
\[
\bar{H}(ab, ay + H(a)y + xb + xH(b) + xy)
= (ab, H(ab) + ay + H(a)y + xb + xH(b) + xy),
\]
and 
\[
(a,H(a)+x)\cdot (b,H(b)+y)
= (ab, a(H(b)+y) + (H(a)+x)b + (H(a)+x)(H(b)+y)).
\]
Equality holds iff $H$ satisfies the crossed homomorphism condition \ref{SCH1}.

\end{proof}

\medskip

Consider the extension of associative algebras
\[
0 \longrightarrow B \xrightarrow{i} A \oplus B \xrightarrow{p} A \longrightarrow 0.
\]
A section $s : A \to A \oplus B$ must be of the form $s(a) = (a,H(a))$.  
By the theorem above, $s$ is an algebra morphism iff $H$ is a crossed homomorphism.

\medskip

Let $ V=V_{0}\oplus V_{1}$ 
be a $\mathbb Z_2$-graded vector space (or super vector space) over a field $\mathbb{k}$. 
Write 
\[
T(V)=\bigoplus_{n\ge 0} V^{\otimes n},
\]
The $\mathbb Z_2$-grading   on $V$ is induces  $\mathbb Z_2$-grading on $T(V)$. For homogeneous elements  $ v_i\in V,$
the degree of $v_1\otimes \cdots \otimes v_n$  is defined by
\[ 
|v_1\otimes \cdots \otimes v_n|
=
|v_1|+\cdots +|v_n|
\pmod 2.
\]
Hence  $ T(V)=T(V)_{0}\oplus T(V)_{1},$ 
where $$  T(V)_{0}
=
\bigoplus_{\substack{n\ge0\\ |v_1|+\cdots+|v_n|=0}}
\Bbbk(v_1\otimes\cdots\otimes v_n),\qquad T(V)_{1}=\bigoplus_{\substack{n\ge 1\\ |v_1|+\cdots+|v_n|= 1}}
\Bbbk(v_1\otimes\cdots\otimes v_n).$$
The tensor concatenation  given by 
\[
(u_1\otimes \cdots \otimes u_m)
(v_1\otimes \cdots \otimes v_n)
=
u_1\otimes \cdots \otimes u_m
\otimes
v_1\otimes \cdots \otimes v_n.
\]
defines a multiplication on $T(V).$ With respect to this multiplication $T(V)$  is an associative superalgebra. This is called  tensor superalgebra of $V$.

Let  
$d : V \to V$ be a homogeneous linear map of degree $0 \in \mathbb{Z}_2$. 

We extend $d$ to a linear map 
\[
H_d : T(V) \to T(V)
\]
by defining, for homogeneous elements $v_1,\dots,v_n \in V$,
\[
\begin{aligned}
H_d(v_1 \otimes \cdots \otimes v_n)
&= \sum_{i}  v_1 \otimes \cdots \otimes d(v_i) \otimes \cdots \otimes v_n \\
&\quad + \sum_{i<j} 
\, v_1 \otimes \cdots \otimes d(v_i) \otimes \cdots \otimes d(v_j) \otimes \cdots \otimes v_n \\
&\quad + \cdots + d(v_1)\otimes \cdots \otimes d(v_n).
\end{aligned}
\]
\begin{prop}
    The linear map $H_d$ is a crossed homomorphism on the tensor superalgebra $T(V)$. 
In other words, $(T(V), H_d)$ is a Sup  AssCH pair.
\end{prop}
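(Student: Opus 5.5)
Observe first that $H_d$ has a closed form. On a pure tensor $v_1\otimes\cdots\otimes v_n$, the defining sum runs over all nonempty subsets $S\subseteq\{1,\dots,n\}$, and the term for $S$ applies $d$ to exactly the factors with index in $S$. Adding the empty subset gives
\[
H_d(v_1\otimes\cdots\otimes v_n)=(\mathrm{id}+d)(v_1)\otimes\cdots\otimes(\mathrm{id}+d)(v_n)-v_1\otimes\cdots\otimes v_n .
\]
Set $\phi=\mathrm{id}+d$. This is an even linear map $V\to V$, so it extends uniquely to $\phi_T=T(\phi):T(V)\to T(V)$, defined by $\phi_T(v_1\otimes\cdots\otimes v_n)=\phi(v_1)\otimes\cdots\otimes\phi(v_n)$ and $\phi_T(1)=1$ in degree $0$. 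The identity above then reads $H_d=\phi_T-\mathrm{id}_{T(V)}$.

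I will adopt the convention $H_d(1)=0$ in degree $0$. It is the empty-sum value of the defining formula, and it is forced anyway by $H(1\cdot1)=2H(1)+H(1)^2$ when $H(1)$ is a scalar. With this convention the identity $H_d=\phi_T-\mathrm{id}$ holds on all of $T(V)$.

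The next step is to check that $\phi_T$ is an even associative superalgebra morphism.
\begin{itemize}
\item It is even because $\phi$ is even, so $|\phi(v_i)|=|v_i|$ and $\phi_T$ preserves the induced $\mathbb Z_2$-grading on $T(V)$.
\item It is multiplicative on pure tensors directly from the definition of concatenation:
\[
\phi_T\big((u_1\otimes\cdots\otimes u_m)(v_1\otimes\cdots\otimes v_n)\big)=\phi(u_1)\otimes\cdots\otimes\phi(u_m)\otimes\phi(v_1)\otimes\cdots\otimes\phi(v_n).
\]
The right-hand side is $\phi_T(u_1\otimes\cdots\otimes u_m)\,\phi_T(v_1\otimes\cdots\otimes v_n)$. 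Multiplicativity on all of $T(V)$ follows by bilinearity, and the cases involving degree $0$ are trivial.
\end{itemize}

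Finally, I invoke the earlier example: if $\phi$ is an even associative superalgebra morphism of $A$, then $H(a)=\phi(a)-a$ is a crossed homomorphism of $A$ with respect to the adjoint bimodule. Applying this with $A=T(V)$ and $\phi_T$ shows that $H_d$ is a crossed homomorphism. $H_d$ is even, being a difference of even maps, so $(T(V),H_d)$ is a Sup AssCH pair.

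The main obstacle is the combinatorial identification of the defining sum with $\phi_T-\mathrm{id}$. One must verify that each nonempty subset $S$ occurs exactly once in the defining sum, which holds since the sum is organized by $|S|=1,2,\dots,n$. One must also verify that expanding $\bigotimes_i(v_i+d(v_i))$ by multilinearity produces exactly one term per subset, including the empty one.

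If a direct verification is preferred instead, the same fact can be seen by splitting the subsets of $\{1,\dots,m+n\}$ as $S=S_1\sqcup S_2$ with $S_1\subseteq\{1,\dots,m\}$. The cases ($S_1$ empty, $S_2$ nonempty), ($S_1$ nonempty, $S_2$ empty), and (both nonempty) give the three terms $aH_d(b)$, $H_d(a)b$, and $H_d(a)H_d(b)$ respectively.
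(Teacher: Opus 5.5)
The paper states this proposition without giving any proof, so there is no authors' argument to compare with. Your proof is correct and complete.

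Reading the defining sum as a sum over nonempty subsets $S\subseteq\{1,\dots,n\}$ gives $H_d=T(\mathrm{id}+d)-\mathrm{id}_{T(V)}$. Then $T(\mathrm{id}+d)$ is an even, unital superalgebra endomorphism of $T(V)$: concatenation carries no signs and $d$ is even. So the paper's own example on $\phi-\mathrm{id}$ finishes the argument. This route explains conceptually why the formula for $H_d$ works, and it is plausibly what the paper's ordering intends, since the $\phi-\mathrm{id}$ example comes just before the tensor construction. Your alternative direct verification, splitting $S=S_1\sqcup S_2$ into three cases, is the elementary check that does not use the example. Both arguments are valid, including the edge cases where a factor lies in $V^{\otimes 0}$.

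One small inaccuracy, which does not affect the proof. For a scalar $c=H(1)$, the identity $H(1\cdot 1)=2H(1)+H(1)^2$ reads $c^2+c=0$. This also admits $c=-1$, realized by $H=-\mathrm{id}$, i.e.\ $\phi=0$ in the paper's example. So that identity alone does not force $H_d(1)=0$. Your convention is justified instead by $H_d(1)$ being the empty sum, equivalently by $T(\mathrm{id}+d)$ being unital. As a compatibility check, testing $a=1$, $b=v\in V$ gives $c\,(v+d(v))=0$, which forces $c=0$ unless $d=-\mathrm{id}$.
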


\section{Cohomology of Crossed Homomorphisms on Associative Superalgebras}

In this section, we consider a differential graded Lie algebra
(dgLa) whose Maurer--Cartan elements are precisely crossed
homomorphisms on associative superalgebras.
This characterization enables us to define the cohomology
of a crossed homomorphism.

Let $V=V_{0}\oplus V_{1}$ be a $\mathbb{Z}_2$-graded vector space and consider
the graded vector space  $\bigoplus_{n\ge 0}\Hom(V^{\otimes n},V)$. We define  $ f\in \Hom(A^{\otimes m},B)$ to have   degree $(m-1,\alpha_f),$ where $\alpha_f\in \mathbb{Z}_2$ if for homogeneous $x_1,\ldots, x_m$ in $A$, $f(x_1,\ldots,x_m)$ is a homogeneous element of $B$ and  $\deg f(x_1,\ldots,x_m)-\sum_i \deg (x_i)$ is congruent to $\alpha_f$ modulo $2.$
There is a degree $(0,0)$ graded Lie bracket, defined by \cite{MR4611401} 

\begin{equation}\label{RA1}
\llbracket f,g\rrbracket
=
f\bullet g
-
(-1)^{(m-1)(n-1)+\alpha_f\alpha_g}
g\bullet f,\end{equation}
for homogeneous maps  $
f\in \Hom(V^{\otimes m},V),$ 
$g\in \Hom(V^{\otimes n},V),$ 
where
\begin{align*}
(f\bullet g)(v_1,\ldots,v_{m+n-1})
&={}
\sum_{i=1}^{m}
(-1)^{(i-1)(n-1)}
(-1)^{\alpha_g(|v_1|+\cdots+|v_{i-1}|)}
\\
&\qquad
f(v_1,\ldots,v_{i-1},
g(v_i,\ldots,v_{i+n-1}),
\ldots,v_{m+n-1}).
\end{align*}
  With defferential $0$ $, (\bigoplus_{n\ge 0}\Hom(V^{\otimes n},V), \llbracket,\rrbracket,0) $ is a differential graded Lie algebra. 
 $\mu\in \Hom(V^{\otimes 2},V)$ is Maurer-Cartan element iff 
$\llbracket \mu,\mu\rrbracket=0$. Now  $\llbracket \mu,\mu\rrbracket=0$ if and only if $\mu\bullet\mu =0.$ 
$$\mu\bullet\mu(a,b,c)=\mu(\mu(a,b),c)+ (-1)^{1+0.|a|}\mu(a,\mu(b,c))=\mu(\mu(a,b),c)- \mu(a,\mu(b,c)),$$ $\forall$ homogeneous $a,b,c\in V$.
Thus $\mu$ is a Maurer-Cartan element of the differential graded Lie algebra $ (\bigoplus_{n\ge 0}\Hom(V^{\otimes n},V), \llbracket,\rrbracket,0) $ if and only if $\mu$ defines an associative superalgebra product on $V.$ 

Let $A$ and $B$ be two associative superalgebras and suppose that
$B$ is an associative $A$-bimodule superalgebra.
Denote the associative products on $A$ and $B$
respectively by $\mu_A$ and $\mu_B$.
Let 
$l:A\otimes B\to B,
\quad
r:B\otimes A\to B$                                                
be the left and right action maps. Consider the $\mathbb{Z}_2$-graded  vector space  $
V=A\oplus B$
and the graded Lie algebra $$ 
\bigoplus_{n\ge0}\Hom((A\oplus B)^{\otimes n},A\oplus B)$$
equipped with bracket given by \ref{RA1}.
Then $ \bigoplus_{n\ge0}\Hom(A^{\otimes n},B)$ 
is an abelian graded Lie subalgebra. Observe that $\mu_B$ is a Maurer--Cartan element in the graded Lie algebra
$$
\bigoplus_{n\ge0}
\Hom((A\oplus B)^{\otimes n},A\oplus B).
$$
Hence it defines a differential $d_{\mu_B}$ of degree $(1,0)$ given by  $d_{\mu_B}:=
\llbracket \mu_B,-\rrbracket.$
Next we define a  derived bracket $\llbracket -,-\rrbracket_1$ on $ 
\bigoplus_{n\ge0}\Hom(A^{\otimes n},B)$  by $$ 
\llbracket f,g\rrbracket_1
=
(-1)^{m-1}
\llbracket
\llbracket \mu_B,f\rrbracket,
g
\rrbracket,
.$$
for homogeneous $ f\in \Hom(A^{\otimes m},B)$ and  $g\in \Hom(A^{\otimes n},B).$ Here, we define  $ f\in \Hom(A^{\otimes m},B)$ to have   degree $(m,\alpha_f),$ where $\alpha_f$ belongs to  $\mathbb{Z}_2$, if for any homogeneous $x_1,\ldots, x_m$ in $A$, $f(x_1,\ldots,x_m)$ is a homogeneous element of $B$ and $\deg f(x_1,\ldots,x_m)-\sum_i \deg (x_i)$ is congruent to $\alpha_f$ modulo $2.$  By direct computation, We have $\llbracket f,g\rrbracket_1=-(-1)^{mn}\llbracket g,f\rrbracket_1$.
Explicitly, for homogeneous
$a_1,\ldots,a_{m+n}\in A$,
\begin{align*}
&\llbracket f,g\rrbracket_1
(a_1,\ldots,a_{m+n})\\
=& (-1)^{m-1}(\mu_B\bullet f)\bullet g(a_1,\ldots,a_{m+n})\\
=& (-1)^{m-1}(\mu_B\bullet f) (g(a_1,\ldots, a_n),a_{n+1},\ldots,a_{m+n})\\
&+(-1)^{m-1+m(n-1)+\alpha_g(|a_1+\ldots,|a_m|)}(\mu_B\bullet f)(a_1,\ldots, a_m,g(a_{m+1},\ldots, a_{m+n}))\\
=& (-1)^{\alpha_f \alpha_g+\alpha_f((|a_1|+\ldots,|a_n|)}\mu_B (g(a_1,\ldots, a_n),f(a_{n+1},\ldots,a_{m+n}))\\
&+(-1)^{mn+1+\alpha_g(|a_1|+\ldots,|a_m|)}\mu_B(f(a_1,\ldots, a_m),g(a_{m+1},\ldots, a_{m+n})).
\end{align*}

Thus $ \left(
\bigoplus_{n\ge0}\Hom(A^{\otimes n},B),
\llbracket-,-\rrbracket_1
\right)$  is a graded Lie algebra.  Also,  $ \llbracket f,g\rrbracket_1$ has degree $(m+n,\alpha_f+\alpha_g)$.

Moreover, it can be easily verified that $\mu_A+l+r$ is also a Maurer--Cartan element in dgLa $ (\bigoplus_{n\ge0}
\Hom((A\oplus B)^{\otimes n},A\oplus B), \llbracket -,-\rrbracket,0).$ 
Hence it induces a differential $  d_{\mu_A+l+r}=\llbracket\mu_A+l+r,-\rrbracket.$  For any homogeneous $a_1,\ldots, a_{m+1}$ belonging to $ A$ and $f\in \Hom(A^{\otimes m},B)$, we have 
\begin{align*}
&(d_{\mu_A+l+r}f)(a_1,\ldots,a_{m+1})
\\
=& \llbracket \mu_A+l+r,f \rrbracket(a_1,\ldots,a_{m+1})\\
={}&
(-1)^{m-1+\alpha_f|a_1|}
a_1f(a_2,\ldots,a_{n+1})+f(a_1,\ldots,a_n)a_{n+1}
\\
&\quad
-
(-1)^{m-1}
\sum_{i=1}^{n}
(-1)^{i-1}
f(a_1,\ldots,a_{i-1},
a_ia_{i+1},
\ldots,a_{n+1}).
\end{align*}
This implies that $d=d_{\mu_A+l+r}$ is a differential of degree $(1,0)$ on the graded space $ 
\bigoplus_{n\ge0}\Hom(A^{\otimes n},B)$.
Furthermore, we have  $\llbracket \mu_A+l+r,\mu_B \rrbracket=0.$  Therefore the differential $d$ is a derivation of degree $(1,0)$ with respect to
$\llbracket-,-\rrbracket_1$. Thus we conclude that  $ \left(\bigoplus_{n\ge0}\Hom(A^{\otimes n},B),
\llbracket-,-\rrbracket_1,d\right)$  is a differential graded Lie algebra.

\begin{prop}
Let $A$ and $B$ be two associative superalgebras. Suppose that $A$ acts on $B$ from left and right sides. A linear map $  H:A\to B$ 
of degree $0$ is a crossed homomorphism from $A$ to $B$
if and only if $  H\in Hom(A,B)$ 
is a Maurer--Cartan element in the differential graded Lie algebra $$ 
\left(
\bigoplus_{n\ge0}\Hom(A^{\otimes n},B),
\llbracket-,-\rrbracket_1,
d
\right).$$ 
\end{prop}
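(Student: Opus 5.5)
The plan is to compute the Maurer--Cartan expression $dH+\tfrac12\llbracket H,H\rrbracket_1$ explicitly for $H\in\Hom(A,B)$ of degree $(1,0)$, i.e.\ $m=1$, $\alpha_H=0$. We then check that it vanishes on all homogeneous pairs $(a,b)$ exactly when Equation \ref{SCH1} holds. Since the Maurer--Cartan equation lives in $\Hom(A^{\otimes 2},B)$, it is enough to evaluate both pieces on homogeneous $a_1,a_2\in A$.

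\textbf{Step 1: the differential.} Use the explicit formula for $d=d_{\mu_A+l+r}$ given above with $m=n=1$ and $\alpha_H=0$. All sign factors $(-1)^{m-1}$ and $(-1)^{\alpha_f|a_1|}$ become $1$, so
\[
(dH)(a_1,a_2)=a_1H(a_2)+H(a_1)a_2-H(a_1a_2).
\]
I would double-check this against the definition $\llbracket \mu_A+l+r,H\rrbracket=(\mu_A+l+r)\bullet H-(-1)^{0}H\bullet(\mu_A+l+r)$. Only the terms in which the inputs land in the correct components survive: $l(a_1,H(a_2))$, $r(H(a_1),a_2)$ and $H(\mu_A(a_1,a_2))$. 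The overall sign convention of the bracket may flip the entire expression, but that does not affect the argument.

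\textbf{Step 2: the derived bracket.} Use the explicit formula for $\llbracket f,g\rrbracket_1$ with $f=g=H$, $m=n=1$, $\alpha_f=\alpha_g=0$. The first term has sign $+1$ and gives $\mu_B(H(a_1),H(a_2))$. The second term has sign $(-1)^{mn+1}=+1$ and gives $\mu_B(H(a_1),H(a_2))$ again. Hence
\[
\llbracket H,H\rrbracket_1(a_1,a_2)=2H(a_1)H(a_2).
\]
Here I must be careful: the displayed formula appears to place $g$ before $f$ in the first term. With $f=g$ this is harmless, but I would recompute $(\mu_B\bullet H)\bullet H$ directly from the definition of $\bullet$ to make sure the two terms add rather than cancel. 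This sign bookkeeping is the main obstacle, since a cancellation would make the bracket identically zero on degree-one elements of an abelian-looking subalgebra. So I would verify, using the definition of $\llbracket\mu_B,H\rrbracket$ (a map with one input in $A$ and one in $B$), that after composing with $H$ both summands appear with the same sign $+1$.

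\textbf{Step 3: conclusion.} Combining the two steps,
\[
\Big(dH+\tfrac12\llbracket H,H\rrbracket_1\Big)(a_1,a_2)=a_1H(a_2)+H(a_1)a_2+H(a_1)H(a_2)-H(a_1a_2),
\]
possibly up to a global sign coming from conventions. This vanishes for all homogeneous $a_1,a_2$ if and only if $H$ satisfies Equation \ref{SCH1}. By linearity, this is equivalent to $H$ being a crossed homomorphism, since $H$ is already assumed to be of degree $0$, which proves the proposition in both directions.
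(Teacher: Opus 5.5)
Your proposal is correct and follows the paper's proof: the paper records that $\bigl(dH+\tfrac12\llbracket H,H\rrbracket_1\bigr)(a,b)=aH(b)+H(a)b-H(ab)+H(a)H(b)$ and reads off Equation \ref{SCH1}, which is exactly what your Steps 1--3 produce from the explicit formulas with $m=n=1$ and $\alpha_H=0$. One caution: only a sign change of the whole Maurer--Cartan expression is harmless, since flipping $dH$ relative to $\llbracket H,H\rrbracket_1$ would change the condition; recomputing from the definition of $\bullet$ confirms your signs, because the only surviving terms are $l(a_1,H(a_2))$, $r(H(a_1),a_2)$, $-H(a_1a_2)$ and $\mu_B(H(a_1),H(a_2))$ counted twice, all with the signs you state.
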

\begin{proof}
For a linear map $H:A\to B,$ in the dgla $\left(
\bigoplus_{n\ge0}\Hom(A^{\otimes n},B),
\llbracket-,-\rrbracket_1,
d
\right)$, we have
$\left(
dH+\frac{1}{2}\llbracket H,H\rrbracket_1
\right)(a,b)
=
aH(b)+H(a)b-H(ab)+H(a)H(b),$ 
for all homogeneous $a,b\in A$.
Hence $dH+\frac12\llbracket H,H\rrbracket_1=0$  if and only if $$
H(ab)=
aH(b)+H(a)b+H(a)H(b).$$
\end{proof}
A crossed homomorphism $  H:A\to B$ being a Maurer--Cartan element  induces a differential $d_H=d+\llbracket H,-\rrbracket_1$ 
on the graded Lie algebra $$ \left(
\bigoplus_{n\ge0}\Hom(A^{\otimes n},B),
\llbracket-,-\rrbracket_1
\right).$$
Define $C^n(A,B)=\Hom(A^{\otimes n},B)$ and $C^\ast(A,B)=\bigoplus_{n\ge0}C^n(A,B).$ 
Then a crossed homomorphism determines a dgLa $ (C^\ast(A,B), \llbracket-,- \rrbracket_1,d_H).$
\begin{thm}
Let $A,B$ be associative superalgebras and suppose that $B$ is an associative $A$-bimodule superalgebra. Assume that $ H:A\to B$ is a crossed homomorphism. Then for any linear map  $H':A\to B,$ 
the sum $H+H'$ is again a crossed homomorphism if and only if
$H'$ is a Maurer-Cartan element in the dgLa $ (C^\ast(A,B),
\llbracket-,-\rrbracket_1,
d_H).$ 
\end{thm}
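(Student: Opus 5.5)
The plan is to reduce everything to the Maurer--Cartan characterization in the preceding proposition, using only formal properties of the dgLa $\left(C^\ast(A,B),\llbracket-,-\rrbracket_1,d\right)$. By that proposition, $H+H'$ is a crossed homomorphism if and only if
\[
d(H+H')+\tfrac12\llbracket H+H',H+H'\rrbracket_1=0 .
\]
We expand the left-hand side using linearity of $d$ and bilinearity of $\llbracket-,-\rrbracket_1$:
\[
dH+\tfrac12\llbracket H,H\rrbracket_1 \;+\; dH'+\tfrac12\llbracket H,H'\rrbracket_1+\tfrac12\llbracket H',H\rrbracket_1 \;+\;\tfrac12\llbracket H',H'\rrbracket_1 .
\]
The first two terms vanish because $H$ is a crossed homomorphism, again by the preceding proposition.

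The key step is to combine the two mixed terms. Both $H$ and $H'$ lie in $\Hom(A,B)$ and have degree $(1,0)$ in the grading used for $\llbracket-,-\rrbracket_1$. The symmetry relation $\llbracket f,g\rrbracket_1=-(-1)^{mn}\llbracket g,f\rrbracket_1$ with $m=n=1$ then gives $\llbracket H',H\rrbracket_1=\llbracket H,H'\rrbracket_1$. The mixed terms therefore sum to $\llbracket H,H'\rrbracket_1$. The whole expression becomes
\[
dH'+\llbracket H,H'\rrbracket_1+\tfrac12\llbracket H',H'\rrbracket_1 = d_H H'+\tfrac12\llbracket H',H'\rrbracket_1 ,
\]
where $d_H=d+\llbracket H,-\rrbracket_1$. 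This is exactly the Maurer--Cartan expression for $H'$ in $(C^\ast(A,B),\llbracket-,-\rrbracket_1,d_H)$, so the equivalence follows.

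The only genuine obstacle is the sign in the symmetry of $\llbracket-,-\rrbracket_1$ on degree-one elements, i.e.\ that it is symmetric rather than skew there. This is what makes the factor $\tfrac12$ cancel correctly, and I would verify it directly rather than rely only on the stated symmetry relation. To do so, I would use the explicit formula for $\llbracket f,g\rrbracket_1$ with $m=n=1$ and $\alpha_H=\alpha_{H'}=0$. It gives
\[
\llbracket H,H'\rrbracket_1(a,b)=H'(a)H(b)+H(a)H'(b)
\]
(up to the uniform sign convention already fixed by the previous proposition's computation $\tfrac12\llbracket H,H\rrbracket_1(a,b)=H(a)H(b)$), and this is visibly symmetric in $H,H'$.

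As an independent sanity check, I would also expand $(H+H')(ab)$ directly against the crossed homomorphism identity for $H+H'$ and subtract the identity for $H$. What remains is $H'(ab)=aH'(b)+H'(a)b+H(a)H'(b)+H'(a)H(b)+H'(a)H'(b)$, which matches $d_HH'+\tfrac12\llbracket H',H'\rrbracket_1=0$ term by term.
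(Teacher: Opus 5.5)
Your proposal is correct and follows the paper's proof. Both arguments expand $d(H+H')+\tfrac12\llbracket H+H',H+H'\rrbracket_1$, cancel the Maurer--Cartan expression for $H$, and recognize what remains as $d_H H'+\tfrac12\llbracket H',H'\rrbracket_1$. The one addition is that you explicitly justify $\llbracket H',H\rrbracket_1=\llbracket H,H'\rrbracket_1$ on degree-one even maps, which the paper uses silently when it writes $2\llbracket H,H'\rrbracket_1$; your check via $\llbracket H,H'\rrbracket_1(a,b)=H'(a)H(b)+H(a)H'(b)$ confirms this symmetry correctly.
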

\begin{proof}
We compute \begin{align*}
&d(H+H')
+
\frac{1}{2}
\llbracket H+H',H+H'\rrbracket_1\\
=&{}
d(H+H')
+
\frac12
\Bigl(
\llbracket H,H\rrbracket_1
+
2\llbracket H,H'\rrbracket_1
+
\llbracket H',H'\rrbracket_1
\Bigr)
\\
={}&
d_H(H')
+
\frac12
\llbracket H',H'\rrbracket_1,
\end{align*}
since $H$ itself satisfies the Maurer--Cartan equation.
Hence the result follows.
\end{proof}
For any integer $k\ge0$, define $Z_H^k(A,B)
=
\{f\in C^k(A,B)\mid d_H(f)=0\}$ 
and $ 
B_H^k(A,B)
=
\{d_Hf\mid f\in C^{k-1}(A,B)\}.$ 
Then $ B_H^k(A,B)\subseteq Z_H^k(A,B)$ is a subspace. For $k\ge 0$, the quotient space
$H_H^k(A,B)
=
\frac{Z_H^k(A,B)}{B_H^k(A,B)}$ 
are called the $k$th \emph{cohomology group of the crossed homomorphism} $H$.

\section{Formal Deformations of Crossed Homomorphisms}

Deformations of algebraic structures and morphisms play an important role
in deformation theory.
In this section, we study deformations of crossed homomorphisms
between associative superalgebras.

\subsection{Formal Deformations of Crossed Homomorphisms}

Let $A=A_{0}\oplus A_{1}$ be an associative superalgebra over a field
$\mathbb{K}$ of characteristic zero, and let $M$ be an associative
$A$-superalgebra endowed with compatible left and right $A$-actions.
Let $H:A\longrightarrow B$ be  a crossed homomorphism. Let $A[[t]]$ and $B[[t]]$  be formal power series on $A$ and $B$, respectively and  $H_t=\sum_{i=0}^{\infty}t^iH_i$
where $H_0=H$ and each $H_i:A\rightarrow B$ is an even linear map. Note that $A[[t]]$ and $B[[t]]$ are associative superalgebras and  $B[[t]]$ is an associative $A[[t]]$-bimodule.
A \emph{formal one parameter deformation} of $H$ is a linear map $H_t:A[[t]]\longrightarrow B[[t]]$  such that 
\begin{equation}\label{DefCH}
H_t(xy)=H_t(x)\cdot y+x\cdot H_t(y)+H_t(x)H_t(y),
\end{equation}
for all homogeneous $x,y\in A$.
Substituting $  H_t=\sum \limits_{i=0}^{\infty}t^iH_i$
into Equation \ref{DefCH}, we get    
\begin{equation}\label{nth}
H_n(xy)
=
H_n(x)\cdot y
+
x\cdot H_n(y)
+
\sum_{i+j=n}H_i(x)H_j(y),
\qquad n\ge0.
\end{equation}
The equation corresponding to $n=0$ is precisely the crossed homomorphism
identity for $H$. For $n=1$, we obtain
\begin{equation}\label{inf}
H_1(xy)
=
H_1(x)\cdot y
+
x\cdot H_1(y)
+
H(x)H_1(y)
+
H_1(x)H(y),
\end{equation}
From Equation \ref{inf}, we $1$-conclude that $H_1$ is a cocycle in $C^*(A,B).$  $H_1$ is called \emph{infinitesimal } of the deformation $H_t$.
We can write Equation \ref{nth} as 
\begin{equation}\label{nth1}
-\sum_{\substack{i+j=n\\i,j\ge 1}}H_i(x)H_j(y),
=
-H_n(xy)+H_n(x)\cdot y
+
x\cdot H_n(y)
+ H_n(x)H(y)+H(x)H_n(y),
\end{equation}
for all $n\ge 0.$
By direct calculation we find that $$d_H H_n(x,y)=-H_n(xy)+H_n(x)\cdot y + x\cdot H_n(y) + H_n(x)H(y)+H(x)H_n(y)$$ and $$-\frac{1}{2} \sum_{\substack{i+j=n\\i,j\ge 1}}\llbracket H_i,H_j \llbracket_1(x,y)=-\sum_{\substack{i+j=n\\i,j\ge 1}}H_i(x)H_j(y).$$ Hence Equation \ref{nth1} can be written as 
\begin{equation}\label{obs}
  -\frac{1}{2} \sum_{\substack{i+j=n\\i,j\ge 1}}\llbracket H_i,H_j\rrbracket_1(x,y)= d_H (H_n) 
\end{equation}
 A formal  one parameter deformation of order $n$ is a linear map $H_t: \frac{A[[t]]}{(t^{n+1})}\to \frac{B[[t]]}{(t^{n+1})}$  such that  
\begin{itemize}
    \item[(a)] $H_t=\sum_{i=0}^{n}t^iH_i $, where $H_0=H$ and $H_i:A\to B$ is an even linear map for $1\le i\le n.$
    \item[(b)] $H_t(xy)=H_t(x)\cdot y+x\cdot H_t(y)+H_t(x)H_t(y)$.
\end{itemize}
By direct computation, for all $1\le m\le n$, we get following Equation 
\begin{equation}\label{obs1}
    -\frac{1}{2} \sum_{\substack{i+j=m\\i,j\ge 1}}\llbracket H_i,H_j\rrbracket_1(x,y)= d_H (H_m).
\end{equation}
 We denote the left hand side of Equation \ref{obs1} by $ob_H^m$ and call it \textit{Obstruction} of order $m.$
\begin{thm}
Obstructions are cocycles, that is $d_H(ob_H^m)=0$.
\end{thm}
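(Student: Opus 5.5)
The plan is the standard Maurer--Cartan argument in the dgLa $(C^\ast(A,B),\llbracket-,-\rrbracket_1,d_H)$. The obstruction in question is really the one for extending an order-$n$ deformation to order $n+1$, namely
\[
ob_H^{n+1}=-\tfrac12\sum_{\substack{i+j=n+1\\ i,j\ge1}}\llbracket H_i,H_j\rrbracket_1 ,
\]
where $H_1,\dots,H_n$ satisfy Equation \ref{obs1} for all $1\le m\le n$. For $m\le n$ the claim is trivial, since then $ob_H^m=d_H(H_m)$ and $d_H^2=0$. So the content is the case $m=n+1$.

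\textbf{Tools.} I will use three facts.
\begin{enumerate}
\item $d_H=d+\llbracket H,-\rrbracket_1$ is a degree $(1,0)$ derivation of $\llbracket-,-\rrbracket_1$ with $d_H^2=0$. This holds because $d$ is a derivation with $d^2=0$, $H$ satisfies the Maurer--Cartan equation (the Proposition characterizing crossed homomorphisms), and the Jacobi identity holds.
\item Each $H_i$ is even of arity $1$, hence has degree $(1,0)$. The derivation rule therefore reads
\[
d_H\llbracket H_i,H_j\rrbracket_1=\llbracket d_HH_i,H_j\rrbracket_1-\llbracket H_i,d_HH_j\rrbracket_1 .
\]
\item The graded symmetry $\llbracket f,g\rrbracket_1=-(-1)^{mn}\llbracket g,f\rrbracket_1$ makes the bracket of two degree-one elements symmetric.
\end{enumerate}

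\textbf{Main computation.} Applying $d_H$ to $ob_H^{n+1}$ gives
\[
d_H(ob_H^{n+1})=-\tfrac12\sum_{i+j=n+1}\Bigl(\llbracket d_HH_i,H_j\rrbracket_1-\llbracket H_i,d_HH_j\rrbracket_1\Bigr).
\]
Since $d_HH_i$ has degree $(2,0)$ and $H_j$ has degree $(1,0)$, the symmetry gives $\llbracket H_i,d_HH_j\rrbracket_1=-\llbracket d_HH_j,H_i\rrbracket_1$. After reindexing, this collapses to
\[
d_H(ob_H^{n+1})=-\sum_{i+j=n+1}\llbracket d_HH_i,H_j\rrbracket_1 .
\]
Because $i,j\ge1$ we have $i\le n$, so I can substitute Equation \ref{obs1} for $d_HH_i$. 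The result is a triple sum
\[
d_H(ob_H^{n+1})=\tfrac12\sum_{p+q+j=n+1}\llbracket\llbracket H_p,H_q\rrbracket_1,H_j\rrbracket_1 ,\qquad p,q,j\ge1 .
\]

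\textbf{Main obstacle.} I expect the real work to be showing this triple sum vanishes. The sum is symmetric under permutations of the index triple $(p,q,j)$, so it equals $\tfrac16\sum$ of the cyclic sums. I will then use the graded Jacobi identity for degree-one elements, which says
\[
\llbracket\llbracket x,y\rrbracket_1,z\rrbracket_1+\text{cyclic}=0 .
\]
Equivalently, this is $\llbracket\llbracket\Phi,\Phi\rrbracket_1,\Phi\rrbracket_1=0$ for odd $\Phi=\sum_i t^iH_i$, read off at the coefficient of $t^{n+1}$. The delicate point is tracking the Koszul signs, which involve both the $\mathbb{Z}$-degree and the parity $\alpha$. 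These come out cleanly only because every $H_i$ is even, so the parity contributions vanish and only the $\mathbb{Z}$-degree signs survive.

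As a fallback, I can verify the identity concretely by evaluation. Using $\llbracket H_i,H_j\rrbracket_1(x,y)=H_i(x)H_j(y)+H_j(x)H_i(y)$ and the explicit formula for $d_H$ on $2$-cochains, I would evaluate on $(x,y,z)$. Associativity of $B$ and the bimodule axioms then make all terms cancel.
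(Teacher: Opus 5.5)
Your proposal is correct and follows essentially the paper's route: apply the derivation rule for $d_H$, substitute Equation~\ref{obs1} for the lower-order terms $d_H H_i$, use the symmetry of $\llbracket-,-\rrbracket_1$ on degree-$(1,0)$ and degree-$(2,0)$ elements to reach $\tfrac12\sum_{i+j+k=m}\llbracket\llbracket H_i,H_j\rrbracket_1,H_k\rrbracket_1$, and conclude with the graded Jacobi identity; the only difference is that you combine the two derivation terms before substituting, whereas the paper combines them after. Your remark that $m\le n$ is trivial by $d_H^2=0$, so that the real content is $m=n+1$, usefully makes explicit what the paper leaves implicit; the constant in ``$\tfrac16$ of the cyclic sums'' should be $\tfrac13$, which is harmless since each cyclic sum vanishes.
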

\begin{proof} By using Equation \ref{obs1}, we have 
\begin{align*}
        &d_H( -\frac{1}{2} \sum_{\substack{i+j=m\\i,j\ge 1}}\llbracket H_i,H_j\rrbracket_1)\\
        =&-\frac{1}{2}\sum_{\substack{i+j=m\\i,j\ge 1}}\{\llbracket d_HH_i,H_j \rrbracket_1-\llbracket H_i,d_HH_j\rrbracket_1\}\\
        =&\frac{1}{4}\sum_{\substack{i+j=m\\i,j\ge 1}}\llbracket \sum_{\substack{i_1+i_2=i\\i_1,i_2\ge 1}}\llbracket H_{i_1},H_{i_2}\rrbracket_1,H_j\rrbracket_1
        -\frac{1}{4}\sum_{\substack{i+j=m\\i,j\ge 1}}\llbracket H_i, \sum_{\substack{j_1+j_2=j\\j_1,j_2\ge 1}}\llbracket H_{j_1},H_{j_2}\rrbracket_1\rrbracket_1\\
        =&\frac{1}{4}\sum_{\substack{i_1+i_2+j=m\\i_1,i_2,j\ge 1}}\llbracket \llbracket H_{i_1},H_{i_2}\rrbracket_1,H_j\rrbracket_1 +\frac{1}{4}\sum_{\substack{i+j_1+j_2=m\\i,j_1,j_2\ge 1}}\llbracket \llbracket H_{j_1},H_{j_2}\rrbracket_1,H_i\rrbracket_1\\
        =&\frac{1}{2}\sum_{\substack{i+j+k=m\\i,j,k\ge 1}}\llbracket \llbracket H_{i},H_{j}\rrbracket_1,H_k\rrbracket_1\\
        =&0 \qquad \text{(By using graded Jacobi Identity)}
   \end{align*}
\end{proof}
As a consequence of previous theorem, we have following result.
\begin{cor}
 If $H_H^2(A,B)=0,$ then cohomology class of $Ob_H^m$ is $0$, for each $1\le m\le n$. Hence, every order $n$ deformation can be extended to an order $n+1$ deformation. 
\end{cor}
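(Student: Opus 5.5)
The plan is to combine the preceding theorem (obstructions are cocycles) with the definition of $H_H^2(A,B)$ as $Z_H^2(A,B)/B_H^2(A,B)$. Let $H_t=\sum_{i=0}^n t^iH_i$ be a deformation of order $n$. To extend it we must find an even linear map $H_{n+1}:A\to B$ such that $H_t+t^{n+1}H_{n+1}$ satisfies condition (b) modulo $t^{n+2}$. Expanding as in Equation \ref{nth}, the coefficients of $t^m$ for $m\le n$ are unchanged, because $H_{n+1}$ only enters in degree $\ge n+1$. So the only new requirement is the coefficient of $t^{n+1}$, which by the same computation that produced Equation \ref{obs} reads
\[
d_H(H_{n+1}) = -\frac{1}{2}\sum_{\substack{i+j=n+1\\ i,j\ge 1}}\llbracket H_i,H_j\rrbracket_1 =: ob_H^{n+1}.
\]
Note that $ob_H^{n+1}$ depends only on $H_1,\dots,H_n$.

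First I will check that $ob_H^{n+1}$ is a cocycle. The proof of the preceding theorem only uses Equation \ref{obs1} for the indices $i,j<m$. Hence it applies verbatim with $m=n+1$ to any order $n$ deformation, giving $d_H(ob_H^{n+1})=0$. The key points are that $d_H$ is a derivation of $\llbracket-,-\rrbracket_1$ and that the graded Jacobi identity kills the triple sum. Since $ob_H^{n+1}\in C^2(A,B)$, this places it in $Z_H^2(A,B)$.

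Next I handle the lower orders. For $1\le m\le n$, Equation \ref{obs1} says directly that $ob_H^m=d_H(H_m)\in B_H^2(A,B)$, so its class vanishes; this part needs no hypothesis. For $m=n+1$, the hypothesis $H_H^2(A,B)=0$ gives $Z_H^2=B_H^2$, so there is $f\in C^1(A,B)$ with $d_Hf=ob_H^{n+1}$. Setting $H_{n+1}:=f$ then solves the $t^{n+1}$ equation and produces the required order $n+1$ extension.

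The main obstacle I expect is a parity issue: $H_{n+1}$ must be even, whereas $f$ might have an odd component. I plan to handle this by decomposing $f=f_0+f_1$ into parity-homogeneous parts. Since $d_H$ has degree $(1,0)$ it preserves parity, and $ob_H^{n+1}$ is even because the $H_i$ are even and $\llbracket-,-\rrbracket_1$ adds parities. It follows that $d_Hf_0=ob_H^{n+1}$, so I take $H_{n+1}=f_0$. A secondary check is the sign and normalization, namely that $-\tfrac12\sum\llbracket H_i,H_j\rrbracket_1$ matches $-\sum H_i(x)H_j(y)$ using the graded symmetry of $\llbracket-,-\rrbracket_1$ on degree-$(1,0)$ elements. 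This was already asserted in the derivation of Equation \ref{obs}, so I will simply cite it.
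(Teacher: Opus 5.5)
Your proposal is correct and follows the route the paper intends with its one-line ``consequence of the previous theorem'': the obstruction is a $2$-cocycle, so $H_H^2(A,B)=0$ makes it a coboundary $d_H H_{n+1}$, and that primitive is the next term of the deformation. Your refinements are accurate and the paper leaves them implicit. First, the classes for $m\le n$ vanish trivially because $ob_H^m=d_H H_m$, so the obstruction that actually matters is $ob_H^{n+1}$. Second, the theorem's proof uses Equation~\ref{obs1} only for indices below $m$, so it covers $m=n+1$. Third, you should take the even part of the primitive.
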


%\section*{}
%\bibliographystyle{alpha}
 \bibliography{raj.bib}

 \end{document}